\newtheoremstyle{theorem}
  {10pt}          
  {10pt}  
  {\sl}  
  {\parindent}     
  {\bf}  
  {. }    
  { }    
  {}     
\newtheorem{thm}{Theorem}[section]
\newtheorem{lem}[thm]{Lemma}
\newtheorem{defn}{Definition}[section]
\newtheorem{rem}{Remark}[section]
\begin{document}
\title{\large\bf
Fractional differences and sums with binomial coefficients}
\author{\small \bf T. Abdeljawad $^a$, D. Baleanu $^{a,b}$ , F. Jarad $^{a}$ , R. Agarwal $^{c}$  \\ {\footnotesize $^a$ Department of
Mathematics, \c{C}ankaya University, 06530, Ankara, Turkey}\\
{\footnotesize $^b$ Institute of Space Sciences, Magurele-Bucharest,Romania}\\
{\footnotesize $^c$ Department of Mathematics, Texas  A $\&$ M University - Kingsville, USA}}
\date{}
\maketitle
{\footnotesize {\noindent\bf Abstract.} In fractional calculus there are two approaches to obtain fractional derivatives. The first approach  is by  iterating the integral and then defining a fractional order by using Cauchy formula to obtain Riemann fractional integrals and derivatives. The second approach is by iterating the derivative and then defining a fractional order by making use of the binomial theorem to obtain Gr\"{u}nwald-Litnikov fractional derivatives. In this article we formulate the delta and  nabla discrete versions for left and right fractional integrals and derivatives representing the second approach. Then, we use the discrete version of the Q-operator and some discrete fractional dual identities to prove that the presented fractional differences and sums coincide with the discrete Riemann ones describing the first approach.

{\bf Keywords:} right (left) delta and nabla fractional sums, right (left) delta and nabla  Riemann.  Q-operator, dual identity, binomial coefficient.

\section{Introduction and preliminaries}
Fractional calculus (FC) is developing very fast in both theoretical and applied aspects. As a result FC is used intensively and successfully in the last few decades to describe the  anomalous processes which appear in complex systems \cite{book1, book2,book3,book4,book5,book6}. The complexity of the real world phenomena is a great source of inspiration for the researchers to invent new fractional tools which will be able to dig much dipper into the mysteries of the mother nature. Historically the FC passed thorough different periods  of evolutions and it started to face very recently a new provocation: how to formulate properly its discrete counterpart \cite{Gray,Miller,Ferd1,Ferd2,Ferd3,Ferd4,Gronwall,Thabet1,Thabet2,Holm,Gdelta,Gnabla, Gfound, Nuno}. At this stage we have to stress on the fact that in the classical discrete equations their roots are based on the functional difference equations, therefore the natural question is to find the generalization of these equations to the fractional case. In other words we will end up  with generalizations of the basic operators occurring in standard difference equations. As it was expected there were several attempts to do this generalization as well as to apply this new techniques to investigate the dynamics of some complex processes. In recent years the discrete counterpart of the fractional Riemann-Liouville, Caputo, were investigated mainly thinking how to apply  techniques from the time scales calculus to the expressions of the fractional operators. Despite of the beauty of the obtained results one simple question arises: can we obtain the same results from a new point of view which is more simpler and more intuitive?
Having all above mentioned thinks in mind we  are going to use the binomial theorem in order to get Gr\"{u}nwald-Litnikov fractional derivatives. After that we proved that the results obtained  coincide with the ones obtained by the discretization of the Riemann-Liouville operator. In this manner we believe that it becomes more clear what the fractional difference equations bring new in description of the  related complex phenomena described.

For a natural number $n$, the fractional polynomial is defined by,

 \begin{equation} \label{fp}
 t^{(n)}=\prod_{j=0}^{n-1} (t-j)=\frac{\Gamma(t+1)}{\Gamma(t+1-n)},
 \end{equation}
where $\Gamma$ denotes the special gamma function and the product is
zero when $t+1-j=0$ for some $j$. More generally, for arbitrary
$\alpha$, define
\begin{equation} \label{fpg}
t^{(\alpha)}=\frac{\Gamma(t+1)}{\Gamma(t+1-\alpha)},
\end{equation}
where the convention that division at pole yields zero.
Given that the forward and backward difference operators are defined
by
\begin{equation} \label{fb}
\Delta f(t)=f(t+1)-f(t)\texttt{,}~\nabla f(t)=f(t)-f(t-1)
\end{equation}
respectively, we define iteratively the operators
$\Delta^m=\Delta(\Delta^{m-1})$ and $\nabla^m=\nabla(\nabla^{m-1})$,
where $m$ is a natural number.

 Here are some properties of the  factorial function.

\begin{lem} \label{pfp} (\cite{Ferd1})
Assume the following factorial functions are well defined.

(i) $ \Delta t^{(\alpha)}=\alpha  t^{(\alpha-1)}$.

(ii) $(t-\mu)t^{(\mu)}= t^{(\mu+1)}$, where $\mu \in \mathbb{R}$.

(iii) $\mu^{(\mu)}=\Gamma (\mu+1)$.

(iv) If $t\leq r$, then $t^{(\alpha)}\leq r^{(\alpha)}$ for any
$\alpha>r$.

(v) If $0<\alpha<1$, then $ t^{(\alpha\nu)}\geq
(t^{(\nu)})^\alpha$.

(vi) $t^{(\alpha+\beta)}= (t-\beta)^{(\alpha)} t^{(\beta)}$.
\end{lem}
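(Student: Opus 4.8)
The plan is to reduce every assertion to the defining formula $t^{(\alpha)}=\Gamma(t+1)/\Gamma(t+1-\alpha)$ together with the functional equation $\Gamma(x+1)=x\Gamma(x)$, and for the two inequalities to bring in, respectively, a monotonicity argument and the log-convexity of $\Gamma$.

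For (i) I would expand $\Delta t^{(\alpha)}=(t+1)^{(\alpha)}-t^{(\alpha)}=\frac{\Gamma(t+2)}{\Gamma(t+2-\alpha)}-\frac{\Gamma(t+1)}{\Gamma(t+1-\alpha)}$, pull out the common factor $\Gamma(t+1)/\Gamma(t+2-\alpha)$ using $\Gamma(t+2)=(t+1)\Gamma(t+1)$ and $\Gamma(t+2-\alpha)=(t+1-\alpha)\Gamma(t+1-\alpha)$, and collapse the bracket $(t+1)-(t+1-\alpha)=\alpha$; recognizing $\Gamma(t+2-\alpha)=\Gamma(t+1-(\alpha-1))$ then finishes it. Parts (ii), (iii) and (vi) are one-line Gamma cancellations in the same spirit: for (ii) write $\Gamma(t+1-\mu)=(t-\mu)\Gamma(t-\mu)$ so that $(t-\mu)t^{(\mu)}=\Gamma(t+1)/\Gamma(t-\mu)=t^{(\mu+1)}$; for (iii) note $\Gamma(\mu+1-\mu)=\Gamma(1)=1$; for (vi) multiply the two quotients $\frac{\Gamma(t-\beta+1)}{\Gamma(t-\beta+1-\alpha)}\cdot\frac{\Gamma(t+1)}{\Gamma(t+1-\beta)}$ and cancel $\Gamma(t-\beta+1)=\Gamma(t+1-\beta)$, leaving $\Gamma(t+1)/\Gamma(t+1-\alpha-\beta)$.

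For (iv) I would argue monotonicity in $t$: on the range where the factorial function is defined and nonnegative, part (i) gives $\Delta t^{(\alpha)}=\alpha\, t^{(\alpha-1)}\ge 0$, so $t^{(\alpha)}$ is nondecreasing along integer steps; alternatively, differentiating, $\frac{d}{dt}t^{(\alpha)}=t^{(\alpha)}\big(\psi(t+1)-\psi(t+1-\alpha)\big)\ge 0$ since the digamma $\psi$ is increasing on $(0,\infty)$ and $\alpha>0$, whence $t\le r$ forces $t^{(\alpha)}\le r^{(\alpha)}$. For (v) the key observation is that $t+1-\alpha\nu=\alpha(t+1-\nu)+(1-\alpha)(t+1)$ is a convex combination; applying log-convexity of $\Gamma$ (Bohr--Mollerup) gives $\Gamma(t+1-\alpha\nu)\le \Gamma(t+1-\nu)^{\alpha}\,\Gamma(t+1)^{1-\alpha}$, and dividing $\Gamma(t+1)$ by both sides yields $t^{(\alpha\nu)}\ge \big(t^{(\nu)}\big)^{\alpha}$ at once.

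The routine cancellations in (i)--(iii) and (vi) carry no real difficulty. The only genuinely delicate point is (iv): one must pin down the precise domain of $t,r,\alpha$ (the hypothesis $\alpha>r$ presumably being exactly what keeps us inside a region where these quantities are well defined and of one sign), since the ``division at a pole yields zero'' convention can destroy naive monotonicity; getting those hypotheses right — and invoking (i) only where $t^{(\alpha-1)}\ge 0$ — is where I expect to spend the most care. Part (v) is conceptually the richest, but once the convex-combination identity is spotted it is immediate from log-convexity of $\Gamma$.
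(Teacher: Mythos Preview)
The paper does not prove this lemma at all: it is quoted as a preliminary result from \cite{Ferd1} (At{\i}c{\i}--Eloe) and is simply stated without argument, so there is no ``paper's own proof'' against which to compare your proposal.

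That said, your outline is sound. The Gamma-function cancellations you sketch for (i), (ii), (iii), (vi) are exactly the standard verifications and go through verbatim. Your treatment of (v) via the convex-combination identity $t+1-\alpha\nu=\alpha(t+1-\nu)+(1-\alpha)(t+1)$ together with the log-convexity of $\Gamma$ is the clean way to do it and is correct (provided $t+1-\nu>0$ so that the arguments stay in the region where $\Gamma$ is positive and log-convex). Your caution about (iv) is well placed: the hypothesis $\alpha>r$ is indeed what keeps $t+1-\alpha$ and $r+1-\alpha$ negative (or at least on the same side of the poles) when $t,r$ are in the intended discrete range, and the statement as written really does require some care about domains; either of your two monotonicity arguments (discrete via (i), or continuous via the digamma) works once those sign conditions are nailed down. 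In short, your proposal supplies what the paper omits.
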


Also, for our purposes we list down the following two properties, the proofs of which are straightforward.

\begin{equation} \label{ou1}
\nabla_s (s-t)^{(\alpha-1)}=(\alpha-1)(\rho(s)-t)^{(\alpha-2)}.
\end{equation}

\begin{equation} \label{ou2}
\nabla_t
(\rho(s)-t)^{(\alpha-1)}=-(\alpha-1)(\rho(s)-t)^{(\alpha-2)}.
\end{equation}

For the sake of the nabla fractional calculus we have the following definition

\begin{defn} \label{rising}(\cite{Adv,Boros,Grah,Spanier})

(i) For a natural number $m$, the $m$ rising (ascending) factorial of $t$ is defined by

\begin{equation}\label{rising 1}
    t^{\overline{m}}= \prod_{k=0}^{m-1}(t+k),~~~t^{\overline{0}}=1.
\end{equation}

(ii) For any real number the $\alpha$ rising function is defined by
\begin{equation}\label{alpharising}
 t^{\overline{\alpha}}=\frac{\Gamma(t+\alpha)}{\Gamma(t)},~~~t \in \mathbb{R}-~\{...,-2,-1,0\},~~0^{\mathbb{\alpha}}=0
\end{equation}

\end{defn}

Regarding the rising factorial function we observe the following:

(i) \begin{equation}\label{oper}
    \nabla (t^{\overline{\alpha}})=\alpha t^{\overline{\alpha-1}}
\end{equation}

 (ii)
 \begin{equation}\label{oper2}
    (t^{\overline{\alpha}})=(t+\alpha-1)^{(\alpha)}.
 \end{equation}

(iii)
\begin{equation}\label{oper3}
   \Delta_t (s-\rho(t))^{\overline{\alpha}}= -\alpha  (s-\rho(t))^{\overline{\alpha-1}}
\end{equation}

\textbf{Notation}:
\begin{enumerate}
\item[$(i)$] For a real $\alpha>0$, we set $n=[\alpha]+1$, where $[\alpha]$ is the greatest integer less than $\alpha$.

\item[$(ii)$] For real numbers $a$ and $b$, we denote $\mathbb{N}_a=\{a,a+1,...\}$ and $~_{b}\mathbb{N}=\{b,b-1,...\}$.

\item[$(iii)$]For $n \in \mathbb{N}$ and real $a$, we denote
$$ _{\circleddash}\Delta^n f(t)\triangleq (-1)^n\Delta^n f(t).$$
\item[$(iv)$]For $n \in \mathbb{N}$ and real $b$, we denote
                   $$ \nabla_{\circleddash}^n f(t)\triangleq (-1)^n\nabla^n f(t).$$
\end{enumerate}

\begin{defn} \label{fractional sums}
Let $\sigma(t)=t+1$ and $\rho(t)=t-1$ be the forward and backward jumping operators, respectively. Then

(i) The (delta) left fractional sum of order $\alpha>0$ (starting from $a$) is defined by:
\begin{equation}\label{dls}
    \Delta_a^{-\alpha} f(t)=\frac{1}{\Gamma(\alpha)} \sum_{s=a}^{t-\alpha}(t-\sigma(s))^{(\alpha-1)}f(s),~~t \in \mathbb{N}_{a+\alpha}.
\end{equation}

(ii) The (delta) right fractional sum of order $\alpha>0$ (ending at  $b$) is defined by:
\begin{equation}\label{drs}
   ~_{b}\Delta^{-\alpha} f(t)=\frac{1}{\Gamma(\alpha)} \sum_{s=t+\alpha}^{b}(s-\sigma(t))^{(\alpha-1)}f(s)=\frac{1}{\Gamma(\alpha)} \sum_{s=t+\alpha}^{b}(\rho(s)-t)^{(\alpha-1)}f(s),~~t \in ~_{b-\alpha}\mathbb{N}.
\end{equation}

(iii) The (nabla) left fractional sum of order $\alpha>0$ (starting from $a$) is defined by:
\begin{equation}\label{nlf}
  \nabla_a^{-\alpha} f(t)=\frac{1}{\Gamma(\alpha)} \sum_{s=a+1}^t(t-\rho(s))^{\overline{\alpha-1}}f(s),~~t \in \mathbb{N}_{a+1}.
\end{equation}

(iv)The (nabla) right fractional sum of order $\alpha>0$ (ending at $b$) is defined by:
\begin{equation}\label{nrs}
   ~_{b}\nabla^{-\alpha} f(t)=\frac{1}{\Gamma(\alpha)} \sum_{s=t}^{b-1}(s-\rho(t))^{\overline{\alpha-1}}f(s)=\frac{1}{\Gamma(\alpha)} \sum_{s=t}^{b-1}(\sigma(s)-t)^{\overline{\alpha-1}}f(s),~~t \in ~_{b-1}\mathbb{N}.
\end{equation}
\end{defn}

Regarding the delta left fractional sum we observe the following:

(i) $\Delta_a^{-\alpha}$ maps functions defined on $\mathbb{N}_a$ to
functions defined on $\mathbb{N}_{a+\alpha}$.

(ii) $u(t)=\Delta_a^{-n}f(t)$, $n \in \mathbb{N}$, satisfies the
initial value problem
\begin{equation} \label{ivpf}
\Delta^n u(t)=f(t),~~t\in N_a,~u(a+j-1)=0,~ j=1,2,...,n.
\end{equation}

(iii) The Cauchy function $\frac{(t-\sigma(s))^{(n-1)}}{(n-1)!}$
vanishes at $s=t-(n-1),...,t-1$.

\indent

Regarding the delta right fractional sum we observe the following:

(i)  $~_{b}\Delta^{-\alpha}$ maps functions defined on $_{b}\mathbb{N}$ to
functions defined on $_{b-\alpha}\mathbb{N}$.

(ii) $u(t)=~_{b}\Delta^{-n}f(t)$, $n \in \mathbb{N}$, satisfies the
initial value problem
\begin{equation} \label{ivpb}
\nabla_\ominus^n u(t)=f(t),~~t\in~ _{b}N,~u(b-j+1)=0,~ j=1,2,...,n.
\end{equation}

(iii) the Cauchy function $\frac{(\rho(s)-t)^{(n-1)}}{(n-1)!}$
vanishes at $s=t+1,t+2,...,t+(n-1)$.

\indent

Regarding the nabla left fractional sum we observe the following:

(i) $ \nabla_a^{-\alpha}$ maps functions defined on $\mathbb{N}_a$ to functions defined on $\mathbb{N}_{a}$.

(ii)$ \nabla_a^{-n}f(t)$ satisfies the n-th order discrete initial value problem

\begin{equation}\label{s1}
    \nabla^n y(t)=f(t),~~~\nabla^i y(a)=0,~~i=0,1,...,n-1
\end{equation}

(iii) The Cauchy function $\frac{(t-\rho(s))^{\overline{n-1}}}{\Gamma(n)}$ satisfies $\nabla^n y(t)=0$.

\indent

Regarding the nabla right fractional sum we observe the following:

(i) $ ~_{b}\nabla^{-\alpha}$ maps functions defined on $~_{b}\mathbb{N}$ to functions defined on $~_{b}\mathbb{N}$.

(ii)$ ~_{b}\nabla^{-n}f(t)$ satisfies the n-th order discrete initial value problem

\begin{equation}\label{s2}
    ~_{\ominus}\Delta^n y(t)=f(t),~~~ ~_{\ominus}\Delta^i y(b)=0,~~i=0,1,...,n-1.
\end{equation}
The proof can be done inductively. Namely, assuming it is true for $n$, we have
\begin{equation}\label{t1}
    ~_{\ominus}\Delta^{n+1} ~_{b}\nabla^{-(n+1)}f(t)=~_{\ominus}\Delta^{n}[-\Delta ~_{b}\nabla^{-(n+1)}f(t)].
\end{equation}

By the help of (\ref{oper3}), it follows that
\begin{equation}\label{t2}
  ~_{\ominus}\Delta^{n+1} ~_{b}\nabla^{-(n+1)}f(t)=  ~_{\ominus}\Delta^{n} ~_{b}\nabla^{-n}f(t)=f(t).
\end{equation}
The other part is clear by using the convention that $\sum_{k=s+1}^s=0$.

(iii) The Cauchy function $\frac{(s-\rho(t))^{\overline{n-1}}}{\Gamma(n)}$ satisfies $_{\ominus}\Delta^n y(t)=0$.

\indent

\begin{defn} \label{fractional differences}
(i)\cite{Miller}  The (delta) left fractional difference of order $\alpha>0$ (starting from $a$ ) is defined by:
\begin{equation}\label{dls}
    \Delta_a^{\alpha} f(t)=\Delta^n \Delta_a^{-(n-\alpha)} f(t)= \frac{\Delta^n}{\Gamma(n-\alpha)} \sum_{s=a}^{t-(n-\alpha)}(t-\sigma(s))^{(n-\alpha-1)}f(s),~~t \in \mathbb{N}_{a+(n-\alpha)}
\end{equation}

(ii) \cite{Thabet2} The (delta) right fractional difference of order $\alpha>0$ (ending at  $b$ ) is defined by:
\begin{equation}\label{drd}
   ~_{b}\Delta^{\alpha} f(t)=  \nabla_{\circleddash}^n  ~_{b}\Delta^{-(n-\alpha)}f(t)=\frac{(-1)^n \nabla ^n}{\Gamma(n-\alpha)} \sum_{s=t+(n-\alpha)}^{b}(s-\sigma(t))^{(n-\alpha-1)}f(s),~~t \in ~_{b-(n-\alpha)}\mathbb{N}
\end{equation}

(iii) \cite{Holm} The (nabla) left fractional difference of order $\alpha>0$ (starting from $a$ ) is defined by:
\begin{equation}\label{nld}
  \nabla_a^{\alpha} f(t)=\nabla^n \nabla_a^{-(n-\alpha)}f(t)= \frac{\nabla^n}{\Gamma(n-\alpha)} \sum_{s=a+1}^t(t-\rho(s))^{\overline{n-\alpha-1}}f(s),~~t \in \mathbb{N}_{a+1}
\end{equation}

(iv) (\cite{THFer}, \cite{Thsh} The (nabla) right fractional difference of order $\alpha>0$ (ending at $b$ ) is defined by:
\begin{equation}\label{nrd}
   ~_{b}\nabla^{\alpha} f(t)= ~_{\circleddash}\Delta^n ~_{b}\nabla^{-(n-\alpha)}f(t) =\frac{(-1)^n\Delta^n}{\Gamma(n-\alpha)} \sum_{s=t}^{b-1}(s-\rho(t))^{\overline{n-\alpha-1}}f(s),~~t \in ~ _{b-1}\mathbb{N}
\end{equation}

\end{defn}

Regarding the domains of the fractional type differences we observe:

(i) The delta left fractional difference $\Delta_a^\alpha$ maps functions defined on $\mathbb{N}_a$ to functions defined on $\mathbb{N}_{a+(n-\alpha)}$.

(ii) The delta right fractional difference $~_{b}\Delta^\alpha$ maps functions defined on $~_{b}\mathbb{N}$ to functions defined on $~_{b-(n-\alpha)}\mathbb{N}$.

(iii) The nabla left fractional difference $\nabla_a^\alpha$ maps functions defined on $\mathbb{N}_a$ to functions defined on $\mathbb{N}_{a+n}$ .

(iv)  The nabla right fractional difference $~_{b}\nabla^\alpha$ maps functions defined on $~_{b}\mathbb{N}$ to functions defined on $~_{b-n}\mathbb{N}$ .

\begin{lem} \label{left dual}\cite{Ferd3}
Let $0\leq n-1< \alpha \leq n$ and let $y(t)$ be defined on $\mathbb{N}_a$. Then the following statements are valid.

(i)$ (\Delta_a^\alpha) y(t-\alpha)= \nabla_{a-1}^\alpha y(t)$ for $t \in \mathbb{N}_{n+a}$.

(ii) $ (\Delta_a^{-\alpha}) y(t+\alpha)= \nabla_{a-1}^{-\alpha} y(t)$ for $t \in \mathbb{N}_a$.
\end{lem}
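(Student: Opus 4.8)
The plan is to establish (ii) first by a direct manipulation of the defining sums, and then to deduce (i) from (ii) together with the elementary relation between the $n$-th forward and backward differences. For (ii), I would start from Definition \ref{fractional sums}(i) and replace $t$ by $t+\alpha$; the upper summation limit $t+\alpha-\alpha$ then collapses to $t$, giving
\[
(\Delta_a^{-\alpha} y)(t+\alpha)=\frac{1}{\Gamma(\alpha)}\sum_{s=a}^{t}\bigl(t+\alpha-\sigma(s)\bigr)^{(\alpha-1)}y(s),
\]
which already has exactly the summation range $s=a,\dots,t$ appearing in $\nabla_{a-1}^{-\alpha}y(t)$ in (\ref{nlf}). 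Hence the claim reduces to the termwise identity $\bigl(t+\alpha-\sigma(s)\bigr)^{(\alpha-1)}=\bigl(t-\rho(s)\bigr)^{\overline{\alpha-1}}$ for $a\le s\le t$; writing $k=t-s\ge 0$ and using $\sigma(s)=s+1$, $\rho(s)=s-1$, this is $(k+\alpha-1)^{(\alpha-1)}=(k+1)^{\overline{\alpha-1}}$, which is precisely relation (\ref{oper2}) with argument $k+1$ and exponent $\alpha-1$. Summing over $s$ proves (ii); the degenerate term $k=0$ is the identity $(\alpha-1)^{(\alpha-1)}=\Gamma(\alpha)$ of Lemma \ref{pfp}(iii), and the convention $\sum_{s=a+1}^{a}=0$ covers $t=a$.

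For (i), the extra ingredient is the shift identity $\Delta^{n}g(t)=\nabla^{n}g(t+n)$, which follows at once from the binomial expansions $\Delta^{n}g(t)=\sum_{k=0}^{n}(-1)^{n-k}\binom{n}{k}g(t+k)$ and $\nabla^{n}g(t)=\sum_{k=0}^{n}(-1)^{k}\binom{n}{k}g(t-k)$ after re-indexing $k\mapsto n-k$. Applying it with $g=\Delta_a^{-(n-\alpha)}y$ and using Definition \ref{fractional differences}(i),
\[
(\Delta_a^{\alpha}y)(t-\alpha)=\bigl(\Delta^{n}\Delta_a^{-(n-\alpha)}y\bigr)(t-\alpha)=\nabla^{n}\bigl[\Delta_a^{-(n-\alpha)}y\bigr](t-\alpha+n).
\]
Now part (ii), applied with $\alpha$ replaced by $n-\alpha$ (when $\alpha=n$ the operator $\Delta_a^{-(n-\alpha)}$ is the identity and the claim is just the shift identity), gives $\bigl[\Delta_a^{-(n-\alpha)}y\bigr](u)=\bigl[\nabla_{a-1}^{-(n-\alpha)}y\bigr]\!\bigl(u-(n-\alpha)\bigr)$, so the inner function is a backward translation by $n-\alpha$ of $h:=\nabla_{a-1}^{-(n-\alpha)}y$. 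Since $\nabla$ commutes with translations, $\nabla^{n}$ of this function is the same translation of $\nabla^{n}h$, and evaluation at $t-\alpha+n$ makes the two translations cancel, leaving $\nabla^{n}h(t)=\nabla^{n}\nabla_{a-1}^{-(n-\alpha)}y(t)=\nabla_{a-1}^{\alpha}y(t)$ by Definition \ref{fractional differences}(iii). Since $\Delta_a^{\alpha}y$ lives on $\mathbb{N}_{a+(n-\alpha)}$, the expression $(\Delta_a^{\alpha}y)(t-\alpha)$ is meaningful exactly for $t\in\mathbb{N}_{n+a}$, which is the stated range.

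I expect the only delicate point to be the bookkeeping: keeping the jump operators $\sigma,\rho$, the collapsed summation limit in (ii), and the translation by $n-\alpha$ in (i) consistently aligned, and checking at each stage that the operator is evaluated inside its domain of definition. The single genuine analytic fact used is relation (\ref{oper2}) relating the falling and rising factorial functions; once that is available, both parts are purely index manipulation, and in particular (i) needs nothing beyond (ii) and the binomial shift identity.
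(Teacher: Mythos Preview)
Your argument is correct. Part (ii) reduces, exactly as you say, to the kernel identity $(t+\alpha-\sigma(s))^{(\alpha-1)}=(t-\rho(s))^{\overline{\alpha-1}}$, and with $k=t-s$ this is the relation $(k+1)^{\overline{\alpha-1}}=(k+\alpha-1)^{(\alpha-1)}$, an instance of (\ref{oper2}). Part (i) then follows cleanly from (ii) via the shift identity $\Delta^{n}g(u)=\nabla^{n}g(u+n)$, and your bookkeeping with the translation by $n-\alpha$ and the separate handling of the boundary case $\alpha=n$ is accurate.

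There is nothing to compare against in the present paper: Lemma~\ref{left dual} is quoted from \cite{Ferd3} without proof and is used here only as an input to Theorem~\ref{equiv}. Your write-up is essentially the natural direct proof (and is in the spirit of the original reference), relying only on (\ref{oper2}) and the binomial expansions (\ref{delta d})--(\ref{nabla d}) already recorded in the paper.
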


\begin{lem} \label{right dual} \cite{Thsh}
Let $y(t)$ be defined on $~_{b+1}\mathbb{N}$. Then the following statements are valid.

(i)$ (~_{b}\Delta^\alpha) y(t+\alpha)= ~_{b+1}\nabla^\alpha y(t)$ for $t \in ~_{b-n}\mathbb{N}$.

(ii) $ (~_{b}\Delta^{-\alpha}) y(t-\alpha)= ~_{b+1}\nabla^{-\alpha} y(t)$ for $t \in ~_{b}\mathbb{N}$.
\end{lem}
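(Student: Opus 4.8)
The plan is to prove part (ii) first by a direct termwise comparison of the two defining sums, and then deduce part (i) from it by moving an integer‑order difference across the resulting translation of the argument. This mirrors the proof of Lemma \ref{left dual}, with the left endpoint shift $a\mapsto a-1$ there replaced here by the right endpoint shift $b\mapsto b+1$.

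For part (ii), I would compute, from (\ref{drs}) and from (\ref{nrs}) with $b$ replaced by $b+1$,
\[
(~_{b}\Delta^{-\alpha}y)(t-\alpha)=\frac{1}{\Gamma(\alpha)}\sum_{s=t}^{b}(\rho(s)-t+\alpha)^{(\alpha-1)}y(s),
\]
\[
~_{b+1}\nabla^{-\alpha}y(t)=\frac{1}{\Gamma(\alpha)}\sum_{s=t}^{b}(\sigma(s)-t)^{\overline{\alpha-1}}y(s).
\]
Both sums run over $s=t,\dots,b$, so it suffices to check that the kernels agree termwise, i.e. that $(\sigma(s)-t)^{\overline{\alpha-1}}=(\rho(s)-t+\alpha)^{(\alpha-1)}$. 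This is precisely the rising‑to‑falling conversion (\ref{oper2}) applied with base $\sigma(s)-t=s+1-t$ and exponent $\alpha-1$, since $(s+1-t)^{\overline{\alpha-1}}=\bigl((s+1-t)+(\alpha-1)-1\bigr)^{(\alpha-1)}=(s-1-t+\alpha)^{(\alpha-1)}$. The hypothesis $t \in ~_{b}\mathbb{N}$ is exactly what makes both sides well defined, and no further work is needed.

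For part (i), I would write both fractional differences through their definitions (\ref{drd}) and (\ref{nrd}) as integer‑order differences of fractional sums. Setting $g=~_{b}\Delta^{-(n-\alpha)}y$ we have $~_{b}\Delta^{\alpha}y=(-1)^{n}\nabla^{n}g$, and setting $h=~_{b+1}\nabla^{-(n-\alpha)}y$ we have $~_{b+1}\nabla^{\alpha}y=(-1)^{n}\Delta^{n}h$. By part (ii) applied with $\alpha$ replaced by $n-\alpha$ one gets $h(t)=g(t-n+\alpha)$. Two elementary facts then finish the argument: translation by a constant commutes with $\Delta^{n}$, so $\Delta^{n}h(t)=(\Delta^{n}g)(t-n+\alpha)$; and $\nabla^{n}F(t)=(\Delta^{n}F)(t-n)$ for every $F$, which is immediate by induction using $\nabla F(t)=\Delta F(t-1)$ together with $\Delta\nabla=\nabla\Delta$. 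Hence
\[
~_{b+1}\nabla^{\alpha}y(t)=(-1)^{n}(\Delta^{n}g)(t-n+\alpha)=(-1)^{n}(\nabla^{n}g)(t+\alpha)=(~_{b}\Delta^{\alpha}y)(t+\alpha),
\]
which is exactly (i); the factors $(-1)^{n}$ cancel, and the domain condition $t \in ~_{b-n}\mathbb{N}$ is precisely what keeps $g$ defined at $t+\alpha$ and $h$ defined at $t$.

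The computations involved are short; the real work is the bookkeeping — tracking the $\alpha$‑shifts in the arguments, respecting the asymmetry $b\mapsto b+1$ between the left and right versions, and, the step I would flag as the crux, reconciling the ``$\nabla^{n}$ evaluated at $t+\alpha$'' that comes out of the delta side with the ``$\Delta^{n}$ evaluated at $t-n+\alpha$'' that comes out of the nabla side, which is exactly what the identity $\nabla^{n}F(t)=(\Delta^{n}F)(t-n)$ settles.
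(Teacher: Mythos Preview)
Your proof is correct. Part (ii) is a clean termwise comparison: once the lower limit of the delta sum at $t-\alpha$ becomes $s=t$ and the upper limit of the nabla sum with endpoint $b+1$ becomes $s=b$, the two sums have identical ranges, and the kernel identity $(s+1-t)^{\overline{\alpha-1}}=(s-1-t+\alpha)^{(\alpha-1)}$ is exactly (\ref{oper2}). Part (i) then follows from (ii) at order $n-\alpha$ together with the elementary shift identity $\nabla^{n}F(t)=(\Delta^{n}F)(t-n)$, which you justify correctly.

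As for comparison with the paper: there is nothing to compare. The paper does not prove Lemma~\ref{right dual}; it is quoted from \cite{Thsh}, just as Lemma~\ref{left dual} is quoted from \cite{Ferd3}. Your argument therefore supplies a self-contained proof where the paper relies on an external reference. The route you take --- direct verification of the sum identity for (ii), then lifting to (i) via the integer-order relation $\nabla^{n}=\Delta^{n}(\cdot-n)$ --- is the natural one and is indeed dual to the standard proof of Lemma~\ref{left dual}. One small remark: your closing sentence says ``the factors $(-1)^{n}$ cancel,'' but in fact they do not cancel --- they are simply equal on both sides, so the identity holds with the sign intact; this is a phrasing slip, not a mathematical one.
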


If $f(s)$ is defined on $N_a\cap ~_{b}N$ and $a\equiv b~ (mod
~1)$ then $(Qf)(s)=f(a+b-s)$. The Q-operator generates a dual identity by which the left type and the right type fractional sums and differences are related. Using the change of variable $u=a+b-s$, in \cite{Thabet1} it was shown  that
\begin{equation}\label{sum pr}
    \Delta_a^{-\alpha}Qf(t)= Q~_{b}\Delta^{-\alpha}f(t),
\end{equation}

and hence
\begin{equation}\label{pr}
    \Delta_a^\alpha Qf(t)= (Q ~_{b}\Delta^\alpha f)(t).
\end{equation}
The proof of (\ref{pr}) follows by the definition, (\ref{sum pr}) and by noting that

$$-Q\nabla f(t)=\Delta Qf(t).$$

Similarly, in the nabla case we have
\begin{equation}\label{npr sum}
   \nabla_a^{-\alpha}Qf(t)= Q~_{b}\nabla^{-\alpha}f(t),
 \end{equation}

and hence

\begin{equation}\label{rpr}
   \nabla_a^\alpha Qf(t)=( Q ~_{b}\nabla^\alpha f)(t).
\end{equation}

The proof of (\ref{rpr}) follows by the definition, (\ref{npr sum}) and that

$$-Q\Delta f(t)=\nabla Qf(t).$$
For more details about the discrete version of the Q-operator we refer to \cite{Thsh}.

From the difference calculus or time scale calculus, for a natural $n$ and a sequence $f$, we recall

\begin{equation}\label{delta d}
    \Delta^n f(t)=\sum_{k=0}^n (-1)^{k}(\begin{array}{c}
                                  n \\
                                  k
                                \end{array})f(t+n-k),
\end{equation}
and
\begin{equation}\label{nabla d}
    \nabla^n f(t)=\sum_{k=0}^n (-1)^{k}(\begin{array}{c}
                                  n \\
                                  k
                                \end{array})f(t-n+k),
\end{equation}

\section{The fractional differences and sums with binomial coefficients}
We first give the definition of fractional order of (\ref{delta d})  and (\ref{nabla d}) in the left and right sense.
\begin{defn}
The (binomial) delta left fractional difference and sum of order $\alpha >0$ for a function $f$ on defined on $\mathbb{N}_a$, are defined by
\begin{itemize}
  \item \textbf{(a)}$$B\Delta_a^\alpha= \sum_{k=0}^{\alpha+t-a}(-1)^k (\begin{array}{c}
                                                           \alpha \\
                                                           k
                                                         \end{array}
  ) f(t+\alpha-k),~~t \in \mathbb{N}_{a+n-\alpha}$$

  \item \textbf{(b)}$$B\Delta_a^{-\alpha}= \sum_{k=0}^{\alpha+t-a}(-1)^k (\begin{array}{c}
                                                          - \alpha \\
                                                           k
                                                         \end{array}
  ) f(t-\alpha-k),~~t \in \mathbb{N}_{a+\alpha},$$
\end{itemize}
where $(-1)^k (\begin{array}{c}
                                                           \alpha \\
                                                           k
                                                         \end{array})= (\begin{array}{c}
                                                           \alpha+k-1 \\
                                                           k
                                                         \end{array})$.
\end{defn}

\begin{defn}
The (binomial) nabla left fractional difference and sum of order $\alpha >0$ for a function $f$ on defined on $\mathbb{N}_a$, are defined by
\begin{itemize}
  \item \textbf{(a)}$$B\nabla_a^\alpha= \sum_{k=0}^{t-a-1}(-1)^k (\begin{array}{c}
                                                           \alpha \\
                                                           k
                                                         \end{array}
  ) f(t-k),~~t \in \mathbb{N}_{a+n}$$

  \item \textbf{(b)}$$B\nabla_a^{-\alpha}= \sum_{k=0}^{t-a-1}(-1)^k (\begin{array}{c}
                                                          - \alpha \\
                                                           k
                                                         \end{array}
  ) f(t-k),~~t \in \mathbb{N}_{a}.$$
\end{itemize}

\end{defn}
Analogously, in the right case we can define:

\begin{defn}
The (binomial) delta right fractional difference and sum of order $\alpha >0$ for a function $f$  defined on $_{b}\mathbb{N}$, are defined by
\begin{itemize}
  \item \textbf{(a)}$$~_{b}\Delta B^\alpha= \sum_{k=0}^{\alpha+b-t}(-1)^k (\begin{array}{c}
                                                           \alpha \\
                                                           k
                                                         \end{array}
  ) f(t-\alpha+k),~~t \in {b-n+\alpha}\mathbb{N}$$

  \item \textbf{(b)}$$~_{b}\Delta B^{-\alpha}= \sum_{k=0}^{-\alpha+b-t}(-1)^k (\begin{array}{c}
                                                           -\alpha \\
                                                           k
                                                         \end{array}
  ) f(t+\alpha+k),~~t \in {b-\alpha}\mathbb{N}$$

\end{itemize}

\end{defn}

\begin{defn}
The (binomial) nabla right fractional difference and sum of order $\alpha >0$ for a function $f$ on defined on $_{b}\mathbb{N}$, are defined by
\begin{itemize}
  \item \textbf{(a)}$$~_{b}\nabla B^\alpha= \sum_{k=0}^{b-t-1}(-1)^k (\begin{array}{c}
                                                           \alpha \\
                                                           k
                                                         \end{array}
  ) f(t-k),~~t \in ~_{b-n}\mathbb{N}$$

  \item \textbf{(b)}$$~_{b}\nabla B^{-\alpha}= \sum_{k=0}^{b-t-1}(-1)^k (\begin{array}{c}
                                                           -\alpha \\
                                                           k
                                                         \end{array}
  ) f(t-k),~~t \in~ _{b}\mathbb{N}.$$

\end{itemize}

\end{defn}

We next proceed to show that the Riemann fractional differences and sums coincide with the binomial ones defined above. We will use the dual identities in Lemma \ref{left dual} and Lemma \ref{right dual}, and the action of the discrete  version of the Q-operator to follow easy proofs and verifications.
In \cite{Holm} the author used a delta Leibniz's Rule to obtain the following alternative definition for Riemann delta left fractional differences:

\begin{equation} \label{a}
    \Delta_a^\alpha f(t)=\frac{1}{\Gamma(-\alpha)} \sum_{s=a}^{t+\alpha} (t-\sigma(s))^{(-\alpha-1)},~~\alpha \notin \mathbb{N}, ~~t \in \mathbb{N}_{a+n-\alpha},
\end{equation}
then proceeded with long calculations and showed, actually, that

\begin{equation}\label{ldelta}
    \Delta_a^\alpha f(t)= B\Delta_a^\alpha f(t), ~~~\Delta_a^{-\alpha} f(t)= B\Delta_a^{-\alpha} f(t)
\end{equation}

\begin{thm} \label{equiv}
Let $f$ be defined on suitable domains and $\alpha >0$. Then
\begin{itemize}
  \item  1)$$\Delta_a^\alpha f(t)= B\Delta_a^\alpha f(t), ~~~\Delta_a^{-\alpha} f(t)= B\Delta_a^{-\alpha} f(t)$$
  \item 2)$$~_{b}\Delta^\alpha f(t)= ~_{b}\Delta B^\alpha f(t), ~~~ ~_{b}\Delta ^{-\alpha} f(t)= \Delta B_a^{-\alpha} f(t)$$
  \item 3)$$\nabla_a^\alpha f(t)= B\nabla_a^\alpha f(t), ~~~\nabla_a^{-\alpha} f(t)= B\nabla_a^{-\alpha} f(t)$$
  \item 4) $$~_{b}\nabla^\alpha f(t)= ~_{b}\nabla B^\alpha f(t), ~~~~_{b}\nabla^{-\alpha} f(t)= ~_{b}\nabla B^{-\alpha} f(t)$$
\end{itemize}

\end{thm}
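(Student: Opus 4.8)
The plan is to prove part 1) directly from the Riemann definitions and then bootstrap parts 2)--4) from it, using only the $Q$-operator identities (\ref{sum pr})--(\ref{rpr}) and the dual identities of Lemma \ref{left dual} and Lemma \ref{right dual}; once part 1) is in hand, no fractional sum has to be unpacked ``from scratch'' again. For part 1), in the delta left sum (\ref{dls}) I would change the summation index to $s=t-\alpha-k$. Then $(t-\sigma(s))^{(\alpha-1)}=(\alpha+k-1)^{(\alpha-1)}=\Gamma(\alpha+k)/\Gamma(k+1)$, and after dividing by $\Gamma(\alpha)$ this is exactly $(-1)^k\binom{-\alpha}{k}$ by the upper-negation identity, so the sum collapses term by term to $B\Delta_a^{-\alpha}f(t)$, the ranges matching because $f$ is supported on $\mathbb{N}_a$. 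The difference case follows by the same substitution applied to the alternative representation (\ref{a}), or by applying $\Delta^n$ to the sum identity and invoking (\ref{delta d}); this recovers (\ref{ldelta}), which could also simply be quoted from \cite{Holm}.

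I would then derive part 3) from part 1) through Lemma \ref{left dual}, which gives $\nabla_{a-1}^{-\alpha}y(t)=(\Delta_a^{-\alpha})y(t+\alpha)$ and $\nabla_{a-1}^{\alpha}y(t)=(\Delta_a^{\alpha})y(t-\alpha)$. By part 1) the right-hand sides equal $B\Delta_a^{-\alpha}y(t+\alpha)$ and $B\Delta_a^{\alpha}y(t-\alpha)$, and a single shift of the summation index --- the $\pm\alpha$ inside the argument of $B\Delta_a^{\mp\alpha}$ cancels the shift in $t$ --- rewrites these as $\sum_k(-1)^k\binom{-\alpha}{k}y(t-k)$ and $\sum_k(-1)^k\binom{\alpha}{k}y(t-k)$ with upper limit $t-a$, which are precisely $B\nabla_{a-1}^{-\alpha}y(t)$ and $B\nabla_{a-1}^{\alpha}y(t)$. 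Since $a$ is arbitrary, relabelling $a-1\mapsto a$ gives part 3).

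Parts 2) and 4) I would obtain from the $Q$-operator. For part 2), (\ref{sum pr}) and (\ref{pr}) give $Q(~_{b}\Delta^{\mp\alpha}f)=\Delta_a^{\mp\alpha}(Qf)$, which by part 1) equals $B\Delta_a^{\mp\alpha}(Qf)$. Substituting $(Qf)(s)=f(a+b-s)$ into the binomial sum and setting $\tau=a+b-t$ converts $f(t\mp\alpha-k)$ into $f(\tau\pm\alpha+k)$ and the upper summation limit into the one prescribed for $~_{b}\Delta B^{\mp\alpha}$, so $B\Delta_a^{\mp\alpha}(Qf)(t)=Q(~_{b}\Delta B^{\mp\alpha}f)(t)$; hence $Q(~_{b}\Delta^{\mp\alpha}f)=Q(~_{b}\Delta B^{\mp\alpha}f)$, and cancelling the involution $Q$ proves part 2). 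Part 4) follows in the same way from part 3) using the nabla $Q$-identities (\ref{npr sum}) and (\ref{rpr}) --- or, alternatively, from part 2) using the right dual identities of Lemma \ref{right dual}.

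The hard part is not conceptual but clerical: keeping track of the four shifted domains $\mathbb{N}_{a+n-\alpha}$, $~_{b-(n-\alpha)}\mathbb{N}$, $\mathbb{N}_{a+n}$, $~_{b-n}\mathbb{N}$ and of the summation upper limits, so that after each change of variable the finite sums literally coincide rather than merely agreeing up to vanishing tails. In particular one has to verify that exponents such as $(\alpha+k-1)^{(\alpha-1)}$ really do telescope to $\Gamma(\alpha+k)/\Gamma(k+1)$ and line up with the binomial coefficient, and that $Q$ sends each left-sided summation window onto exactly the matching right-sided one; essentially all the work of the proof sits in these bookkeeping checks.
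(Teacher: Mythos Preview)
Your proposal is correct and follows essentially the same architecture as the paper's proof: part 1) is taken from \cite{Holm} (you additionally sketch the direct index-shift computation, which the paper omits), part 2) is obtained from part 1) via the $Q$-operator, part 3) from part 1) via the dual identity of Lemma \ref{left dual}, and part 4) from either of the two preceding parts by the remaining $Q$- or dual-identity; the only cosmetic difference is that the paper orders the derivations 1)$\to$2)$\to$3)$\to$4) rather than your 1)$\to$3)$\to$2)$\to$4).
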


\begin{proof}
\begin{itemize}
  \item 1) follows by (\ref{ldelta}).
  \item 2)By the discrete Q-operator action we have $$ ~_{b}\Delta^\alpha f(t)= Q \Delta_a (Qf)(t)=Q \sum_{k=0}^{\alpha+t-a} (-1)^k(\begin{array}{c}
                                                           \alpha \\
                                                           k
                                                         \end{array}
  ) (Qf)(t+\alpha-k)=~_{b}\Delta B^\alpha f(t).$$ The fractional sum part is also done in a similar way by using the Q-operator.
  \item 3) By the dual identity in Lemma \ref{left dual} (i) and (\ref{ldelta}), we have $$\nabla_a^\alpha f(t)=\Delta_{a+1}^\alpha f(t+\alpha)=B\Delta_{a+1}^\alpha f(t+\alpha)=B\nabla_a^\alpha f(t).$$ The fractional sum part can be proved similarly by using Lemma \ref{left dual} (ii) and (\ref{ldelta}).
  \item 4) The proof can be achieved by either 2) and Lemma \ref{right dual} or alternatively, by 3) and the discrete  Q-operator.
\end{itemize}
\end{proof}

\begin{rem}
In analogous to (\ref{a}) the authors in \cite{Ahrendt} used a nabla Leibniz's Rule to prove that

\begin{equation}\label{aa}
    \nabla_a^\alpha f(t)= \frac{1}{\Gamma(-\alpha)} \sum_{s=a+1}^t (t-\rho(s))^{\overline{-\alpha-1}}f(s).
\end{equation}

In \cite{THFer} the authors used a delta Leibniz's Rule to prove  the following formula for nabla right fractional differences
\begin{equation}\label{b}
    ~_{b}\nabla^\alpha f(t)= \frac{1}{\Gamma(-\alpha)} \sum_{s=t}^{b-1} (s-\rho(t))^{\overline{-\alpha-1}}f(s).
\end{equation}
Similarly, we can use a nabla Leibniz's Rule to prove the following formula for the delta right fractional differences:
\begin{equation}\label{bb}
    ~_{b}\Delta^\alpha f(t)= \frac{1}{\Gamma(-\alpha)} \sum_{s=t-\alpha}^{b} (s-\sigma(t))^{(-\alpha-1)}f(s).
\end{equation}
We here remark that the proofs of the last three parts of Theorem \ref{equiv} can be done  alternatively by proceeding as in \cite{Holm} starting from (\ref{aa}),(\ref{b})and (\ref{bb}). Also it is worth mentioning that mixing both delta and nabla operators in defining delta and nabla right  Riemann fractional differences was essential in proceeding, through the dual identities and the discrete Q-operator or delta and nabla type Leibniz's Rules, to obtain the main results in this article  \cite{Thsh}.
\end{rem}


\begin{thebibliography}{99}

\bibitem{book1}Samko G. Kilbas A. A., Marichev, Fractional Integrals
and Derivatives: Theory and Applications, Gordon and Breach, Yverdon, 1993.

\bibitem{book2}
I. Podlubny, Fractional Differential Equations, Academic Press: San
Diego CA, (1999).

\bibitem{book3} Kilbas A., Srivastava M. H.,and Trujillo J. J., Theory and Application of Fractional Differential Equations, North
Holland Mathematics Studies 204, 2006.

\bibitem{book4} B.J. West, M. Bologna, P. Grogolini, Physics of Fractal Operators. Springer, New
York, 2003.

\bibitem{book5} R.L. Magin, Fractional Calculus in Bioengineering, Begell House Publisher, Inc.
Connecticut, 2006.
\bibitem{book6}
 D. Baleanu, K. Diethelm, E. Scalas, J.J. Trujillo, Fractional Calculus Models and Numerical
 Methods (Series on Complexity, Nonlinearity and Chaos. World Scientific), 2012.

\bibitem{Gray} H. L. Gray and N. F.Zhang, On a new definition of the fractional difference, Mathematics of Computaion 50, (182), 513-529 (1988.)

\bibitem{Miller} K. S. Miller,  Ross B.,Fractional difference
calculus, \emph{Proceedings of the International Symposium on
Univalent Functions, Fractional Calculus and Their Applications}, Nihon University, Koriyama, Japan, (1989), 139-152.

\bibitem{Ferd1} F.M. At{\i}c{\i}  and Eloe P. W.,  A Transform method in
discrete fractional calculus, \emph{International Journal of
Difference Equations}, vol 2, no 2, (2007), 165--176.

\bibitem{Ferd2}F.M.  At{\i}c{\i}  and Eloe P. W.,  Initial value problems in
discrete fractional calculus, \emph{Proceedings of the American
Mathematical Society}, 137, (2009), 981-989.

\bibitem{Ferd3} F. M. At{\i}c{\i} and Paul W.Eloe, Discrete fractional
calculus with the nabla operator, Electronic Journal of Qualitative Theory
of Differential equations, Spec. Ed. I, 2009 No.3,1--12.

\bibitem{Ferd4}F.M. At{\i}c{\i},  \c{S}eng\"{u}l S., Modelling with fractional difference equations,Journal of Mathematical Analysis and Applications,
369 (2010) 1-9.

\bibitem{Gronwall} F. M. At{\i}c{\i}, Paul W. Eloe, Gronwall's inequality on discrete fractional calculus, Computerand Mathematics with applications, In Press, doi:10.1016/camwa. 2011.11.029.

\bibitem{Thabet1} T. Abdeljawad , On Riemann and Caputo fractional differences,
Computers and Mathematics with Applications, Volume 62, Issue 3, August 2011, Pages 1602-1611.

\bibitem{Thabet2}T. Abdeljawad , D. Baleanu , Fractional Differences and
integration by parts, Journal of Computational Analysis and Applications
vol 13 no. 3 , 574-582 (2011).

\bibitem{Holm} M. Holm, The theory of discrete fractional calculus development and application, Dissertation 2011.

\bibitem{Gdelta}G. A. Anastassiou,Principles of delta fractional calculus on time scales and inequalities, Mathematical and Computer Modelling, 52 (2010)556-566.
\bibitem{Gnabla} G. A. Anastassiou, Nabla discrete calcilus and nabla inequalities, Mathematical and Computer Modelling, 51 (2010) 562-571.

\bibitem{Gfound} G. A. Anastassiou,Foundations of nabla fractional calculus on time scales and inequalities,Computer and Mathematics with Applications, 59 (2010) 3750-3762.

\bibitem{Nuno} Nuno R. O. Bastos, Rui A. C. Ferreira, Delfim F. M.
Torres, Discrete-time fractional variational problems, Signal Processing,  91(3): 513-524 (2011).

\bibitem{Adv}Bohner M.  and A. Peterson, Advances in Dynamic Equations on Time Scales, Birkh\"{a}user, Boston, 2003.

\bibitem{Boros}G. Boros and V. Moll, Iresistible Integrals; Symbols,Analysis and Expreiments in the Evaluation of Integrals,
Cambridge University PressCambridge 2004.

\bibitem{Grah} R. L. Graham, D. E. Knuth and O. Patashnik, Concrete Mathematics, A Foundation for Copmuter Science,
2nd ed. , Adison-Wesley,Reading, MA, 1994.

\bibitem{Spanier} J. Spanier and K. B. Oldham, The Pochhammer Polynomials $(x)_n$, An Atlas of Functions,
pp. 149-156,Hemisphere, Washington, DC, 1987.


\bibitem{THFer}T. Abdeljawad, F. At{\i}c{\i}, On the Definitions of Nabla Fractional Operators, Abstract and Applied Analysis, in press.

\bibitem{Ahrendt}K. Ahrendt, L. Castle, M. Holm, K. Yochman, Laplace transforms for the nabla -difference operator and a fractional variation of parameters formula, \emph{Communications in Applied Analysis}, to appear.
\bibitem{Thsh}T. Abdeljawad, Dual identities in fractional difference calculus within Riemann, submitted.
\end{thebibliography}
\end{document}